\def\cP{\mathcal P}
\newtheorem{theorem}{Theorem}[section]
\newtheorem{corollary}[theorem]{Corollary}
\newtheorem{lemma}[theorem]{Lemma}
\newtheorem{proposition}[theorem]{Proposition}
\newtheorem{definition}[theorem]{Definition}
\begin{document}

\title{A semigroup theoretic approach to the \\
Whitehead asphericity problem}
\author{Elton Pasku \\
Universiteti i Tiran\"es \\
Fakulteti i Shkencave Natyrore \\
Departamenti i Matematik\"es\\ 
Tiran\"e, Albania \\
elton.pasku@fshn.edu.al}

\date{}

\maketitle

\begin{abstract}

The Whitehead asphericity problem, regarded as a problem of combinatorial group theory, asks whether any subpresentation of an aspherical group presentation is also aspherical. This is a long standing open problem which has attracted a lot of attention. Related to it, throughout the years there have been given several useful characterizations of asphericity which are either combinatorial or topological in nature. The aim of this paper is two fold. First, it brings in methods from semigroup theory to give a new combinatorial characterization of asphericity in terms of what we define here to be the weak dominion of a submonoid of a monoid, and uses this to give a sufficient and necessary condition under which a subpresentation of an aspherical group presentation is aspherical.
\end{abstract}

\maketitle

\section{Introduction}

A 2-dimensional CW-complex $K$ is called {\it aspherical} if $\pi_2(K)=0$. The Whitehead asphericity problem, raised as a question in \cite{JHCW}, asks whether any subcomplex of an aspherical 2-complex is also aspherical. The question can be formulated in group theoretic terms since every group presentation $\mathcal{P}$ has a geometric realisation as a 2-dimensional CW-complex $K(\mathcal{P})$ and so $\mathcal{P}$ is called aspherical if $K(\mathcal{P})$ is aspherical. A useful review of this question is in \cite{Rosebrock}.

The problem is still unsolved despite of many efforts during several decades and it certainly lies deep. An indication to the deepness of the Whitehead problem is the connections it has with other major open problems in low dimension homotopy or group theory. We will mention below two of them. First, a result of Bestvina-Brady \cite{BB} says that a positive answer to Whitehead problem would imply that another old conjecture of Eilenberg and Ganea \cite{EG} is false. This conjecture states that if a discrete group $G$ has cohomological dimension 2, then it has a 2-dimensional Eilenberg-MacLane space $K(G, 1)$. 

Second, in \cite{IK} Ivanov considers the following situation. Suppose that $\mathcal{P}_{1}=(\mathbf{x} ,\mathbf{r})$ is aspherical, $z \notin \mathbf{x}$ and $w(x,z)$ is a word in the alphabet $(\mathbf{x} \cup z)^{\pm 1}$ with nonzero sum exponent of $z$. He conjectures that $\mathcal{P}=(\mathbf{x} \cup z,\mathbf{r} \cup w(x,z))$ is aspherical if and only if the group $G$ given by $\mathcal{P}$ is torsion free, and proves that if this conjecture is false and $G$ is a counterexample, then $G$ is torsion free and the integral group ring $\mathbb{Z}G$ contains zero divisors. So the existence of a counterexample as above would be a counterexample to the Kaplansky problem on zero divisors which asks whether the group ring of a torsion free group over an integral domain can have zero divisors.

There is a large corpus of results which are related to ours and is mostly contained in \cite{BP}, \cite{BH}, \cite{AGP}, \cite{CH82}, \cite{Ger87a}, \cite{Ger87b}, \cite{GR}, \cite{How79}, \cite{How81a}, \cite{How81b}, \cite{How82}, \cite{How83}, \cite{How84}, \cite{How85}, \cite{How98}, \cite{Hue82}, \cite{I IJM}, \cite{IK}, \cite{IL} and \cite{Stefan}.

In our paper we will make use of the review paper \cite{BH} of Brown and Huebschmann which contains several key results about aspherical group presentations one of which is proposition 14 that gives sufficient and necessary conditions under which a group presentation $\mathcal{P}=( \mathbf{x},\mathbf{r} )$ is aspherical. It turns out that the asphericity of $\mathcal{P}$ is encoded in the structure of the free crossed module $(H/P,F,\delta)$ that is associated to $\mathcal{P}$. To be precise we state below proposition 14.
\begin{proposition} (Proposition 14 of \cite{BH})
Let $K(\mathcal{P})$ be the geometric realisation of a group presentation $\mathcal{P}=( \mathbf{x},\mathbf{r})$ and let $G$ be the group given by $\mathcal{P}$. The following are equivalent.
\begin{description}
\item [(i)] The 2-complex $K(\mathcal{P})$ is aspherical.
\item [(ii)] The module $\pi$ of identities for $\mathcal{P}$ is zero.
\item [(iii)] The relation module $\mathcal{N(P)}$ of $\mathcal{P}$ is a free left $\mathbb{Z}G$ module on the images of the relators $r \in \mathbf{r}$.
\item [(iv)] Any identity $Y$-sequence for $\mathcal{P}$ is Peiffer equivalent to the empty sequence.
\end{description}
\end{proposition}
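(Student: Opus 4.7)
The plan is to take statement (ii) as the pivot and show that each of (i), (iii), (iv) is equivalent to the vanishing of the module of identities $\pi$. The common engine is the free crossed module $(H/P, F, \delta)$ associated to $\mathcal P$ together with the cellular chain complex of the universal cover $\widetilde{K}(\mathcal P)$. Write $F = F(\mathbf{x})$, let $N$ be the normal closure of $\mathbf{r}$ in $F$, and $G = F/N$; by construction $\delta : H/P \to F$ has image $N$ and kernel $\pi$.

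For (ii) $\Leftrightarrow$ (iii), I would establish the short exact sequence of $\mathbb{Z}G$-modules
$$0 \longrightarrow \pi \longrightarrow (H/P)^{\mathrm{ab}} \longrightarrow \mathcal{N}(\mathcal P) \longrightarrow 0$$
and identify the middle term with the free $\mathbb{Z}G$-module on $\mathbf{r}$. The key point is that the Peiffer relations are exactly what is needed for the $F$-conjugation action on $(H/P)^{\mathrm{ab}}$ to descend to a free $G$-action on the orbits of the generators $\mathbf{r} \times F$; meanwhile $\mathrm{Im}(\delta) = N$ abelianises to the relation module $\mathcal{N}(\mathcal P)$, and $\ker \delta = \pi$ lies in the centre of $H/P$ so it survives unchanged. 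Given the sequence, $\pi = 0$ is equivalent to the surjection onto $\mathcal N(\mathcal P)$ being an isomorphism, which is precisely (iii).

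For (i) $\Leftrightarrow$ (ii), I would identify the above sequence with the tail
$$0 \longrightarrow H_2(\widetilde{K}) \longrightarrow C_2(\widetilde{K}) \xrightarrow{\partial_2} C_1(\widetilde{K})$$
of the cellular chain complex of $\widetilde K = \widetilde{K}(\mathcal P)$, using $C_2(\widetilde K) \cong (H/P)^{\mathrm{ab}}$ and the fact that $\partial_2$ factors through the inclusion $\mathcal N(\mathcal P) \hookrightarrow C_1(\widetilde K)$. Since $\widetilde K$ is simply connected, the Hurewicz theorem gives $\pi_2(K(\mathcal P)) \cong \pi_2(\widetilde K) \cong H_2(\widetilde K) = \pi$, so $K(\mathcal P)$ is aspherical iff $\pi$ vanishes. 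For (ii) $\Leftrightarrow$ (iv), I would translate identity $Y$-sequences into the crossed module language: a $Y$-sequence is an element of the free group $H$, it is an identity sequence precisely when it maps to $1 \in F$ under $\delta$, and two sequences are Peiffer equivalent iff they agree modulo $P$; hence the Peiffer classes of identity $Y$-sequences correspond bijectively to the elements of $\ker(\bar\delta : H/P \to F) = \pi$, and (iv) is just the statement that this set is trivial.

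The main technical obstacle is the first step, namely the clean identification $(H/P)^{\mathrm{ab}} \cong \mathbb{Z}G^{(\mathbf{r})}$ with $\pi$ injecting into it. This requires careful bookkeeping to verify that the Peiffer subgroup together with the commutator subgroup of $H$ produces exactly the relations forcing $F$-conjugation to descend to a free $G$-action, and that no element of $\ker \delta$ is collapsed in the process. Once this identification is secured, all three equivalences fall out of the short exact sequence combined with the Hurewicz theorem.
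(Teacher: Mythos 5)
The paper gives no proof of this statement---it is quoted as Proposition 14 of \cite{BH}---and your outline is essentially the standard argument from that source: the exact sequence $0 \to \pi \to (H/P)^{\mathrm{ab}} \to \mathcal{N}(\mathcal{P}) \to 0$ with free middle term for (ii)$\Leftrightarrow$(iii), Hurewicz together with $H_2(\widetilde{K}) = \ker\partial_2$ for (i)$\Leftrightarrow$(ii), and the identification of Peiffer classes of $Y$-sequences with elements of $H/P$ for (ii)$\Leftrightarrow$(iv). The only reservation is the step you flag yourself: that $(H/P)^{\mathrm{ab}}$ is $\mathbb{Z}G$-free on $\mathbf{r}$ and that $\pi$ injects into it (equivalently $\pi \cap [H/P,H/P]=1$) is a substantive theorem about free crossed modules rather than bookkeeping, so the sketch is correct but complete only modulo that input.
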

The last condition is of a particular interest to us. By definition, a $Y$-sequence for $\mathcal{P}$ is a finite (possibly empty) sequence of the form $((^{u_{1}}r_{1})^{\varepsilon_{1}},...,(^{u_{n}}r_{n})^{\varepsilon_{n}})$ where $r \in \mathbf{r}$, $u$ is a word from the free group $F$ over $\mathbf{x}$ and $\varepsilon =\pm 1$. A $Y$-sequence $((^{u_{1}}r_{1})^{\varepsilon_{1}},...,(^{u_{n}}r_{n})^{\varepsilon_{n}})$ is called an identity $Y$-sequence if it is either empty or if $\prod_{i=1,n}u_{i}r_{i}^{\varepsilon_{i}}u_{i}^{-1}=1$ in $F$. The definition of Peiffer equivalence is based on Peiffer operations on $Y$-sequences and reads as follows.
\begin{itemize}
\item [(i)] An \textit{elementary Peiffer exchange} replaces an adjacent pair $((^{u}r)^{\varepsilon},((^{v}s)^{\delta})$ in a $Y$-sequence by either $((^{ur^{\varepsilon}u^{-1}v}s)^{\delta},(^{u}r)^{\varepsilon})$, or by $((^{v}s)^{\delta}, ((^{vs^{-\delta}v^{-1}u}r)^{\varepsilon})$. 
\item [(ii)] A \textit{Peiffer deletion} deletes an adjacent pair $((^{u}r)^{\varepsilon},(^{u}r)^{-\varepsilon})$ in a $Y$-sequence. 
\item [(iii)] A \textit{Peiffer insertion} is the inverse of the Peiffer deletion.
\end{itemize}
The equivalence relation on the set of $Y$-sequences generated by the above operations is called \textit{Peiffer equivalence}. We recall from \cite{BH} what does it mean for an identity $Y$-sequence $((^{u_{1}}r_{1})^{\varepsilon_{1}},...,(^{u_{n}}r_{n})^{\varepsilon_{n}})$ to have the primary identity property. This means that the indices $1,2,...,n$ are grouped into pairs $(i,j)$ such that $r_{i}=r_{j}$, $\varepsilon_{i}=-\varepsilon_{j}$ and $u_{i}=u_{j}$ modulo $N$ where $N$ is the normal subgroup of $F$ generated by $\mathbf{r}$. Proposition 16 of \cite{BH} shows that every such sequence is Peiffer equivalent to the empty sequence. Given an identity $Y$-sequence $d$ which is equivalent to the empty sequence 1, we would be interested to know what kind of insertions $((^{u}r)^{\varepsilon},(^{u}r)^{-\varepsilon})$ are used along the way of transforming $d$ to 1. It is obvious that keeping track of that information is vital to tackle the Whitehead problem.

The aim of Section \ref{poma} of the present paper is to offer an alternative way in dealing with the asphericity of a group presentation $\mathcal{P}=( \mathbf{x},\mathbf{r})$ by considering a new crossed module $(\mathcal{G}(\Upsilon),F,\tilde{\theta})$ over $F$ where $\mathcal{G}(\Upsilon)$ is the group generated by the symbols $(^{u}r)^{\varepsilon}$ subject to relations $(^{u}r)^{\varepsilon} (^{v}s)^{\delta}=(^{{u{r^{\varepsilon}}u^{-1}}v}s)^{\delta}(^{u}r)^{\varepsilon}$, the action of $F$ on $\mathcal{G}(\Upsilon)$ and the map $\tilde{\theta}$ are defined in the obvious fashion. The advantage of working with $\mathcal{G}(\Upsilon)$ is that unlike to $H/P$, in $\mathcal{G}(\Upsilon)$ the images of insertions $((^{u}r)^{\varepsilon},(^{u}r)^{-\varepsilon})$ do not cancel out and this enables us to express the asphericity in terms of such insertions. This is realized by considering the kernel $\tilde{\Pi}$ of $\tilde{\theta}$ which is the analogue of the module $\pi$ of identities for $\mathcal{P}$ in the standard theory and is not trivial when $\mathcal{P}$ is aspherical. We call $\tilde{\Pi}$ the generalized module of identities for $\mathcal{P}$. This approach turns out to be useful when we discuss in Section \ref{cha} the asphericity of a subpresentation $\mathcal{P}_{1}=(\mathbf{x},\mathbf{r}_{1})$ of an aspherical presentation $\mathcal{P}=(\mathbf{x},\mathbf{r})$ where $\mathbf{r}_{1}$ differs from $\mathbf{r}$ from a single defining relation $r_{0}$. The characterization we give in this section for the asphericity of $\mathcal{P}_{1}$ reduces the search for the asphericity into the problem of deciding whether two certain groups are isomorphic or not.

To prove our results we apply techniques from the theory of semigroup actions and to this end we use concepts like the universal enveloping group $\mathcal{G}(S)$ of a given semigroup $S$, the dominion of a subsemigroup $U$ of a semigroup $S$ and the tensor product of semigroup actions. These concepts are explained, with references, in Section \ref{ma}.

\section{Monoid actions} \label{ma}

For the benefit of the reader not familiar with monoid actions we will list below some basic notions and results that are used in the paper. For further results on the subject the reader may consult the monograph \cite{Howie}. Given $S$ a monoid with identity element 1 and $X$ a nonempty set, we say that $X$ is a \textit{left S-system} if there is an action $(s,x) \mapsto sx$ from $S \times X$ into $X$ with the properties
\begin{align*}
(st)x&=s(tx) \text{ for all } s,t \in S \text{ and } x \in X,\\
1x&=x \text{ for all } x \in X.
\end{align*}
Right $S$-systems are defined analogously in the obvious way. Given $S$ and $T$  (not necessarily different) monoids, we say that $X$ is an \textit{(S,T)-bisystem} if it is a left $S$-system, a right $T$-system, and if
\begin{equation*}
(sx)t=s(xt) \text{ for all } s \in S, t \in T \text{ and } x \in X.
\end{equation*}
If $X$ and $Y$ are both left $S$-systems, then an \textit{S-morphism} or \textit{S-map} is a map $\phi: X \rightarrow Y$ such that
\begin{equation*}
\phi(sx)=s\phi(x) \text{ for all } s \in S \text{ and } x \in X.
\end{equation*}
Morphisms of right $S$-systems and of $(S,T)$-bisystems are defined in an analogue way. 
If we are given a left $T$-system $X$ and a right $S$-system $Y$, then we can give the cartesian product $X \times Y$ the structure of an $(T,S)$-bisystem by setting
\begin{equation*}
t(x,y)=(tx,y) \text{ and } (x,y)s=(x,ys).
\end{equation*}
Let now $A$ be an $(T,U)$-bisystem, $B$ an $(U,S)$-bisystem and $C$ an $(T,S)$-bisystem. As explained above, we can give to $A \times B$ the structure of an $(T,S)$-bisystem. With this in mind we say that a $(T,S)$-map $\beta: A \times B \rightarrow C$ is a \textit{bimap} if
\begin{equation*}
\beta(au,b)=\beta(a,ub) \text{ for all } a\in A, b \in B \text{ and } u \in U.
\end{equation*}
A pair $(A\otimes_{U}B,\psi)$ consisting of a $(T,S)$-bisystem $A\otimes_{U}B$ and a bimap $\psi: A \times B \rightarrow A \otimes_{U}B$ will be called a \textit{tensor product of A and B over U} if for every $(T,S)$-bisystem $C$ and every bimap $\beta: A \times B \rightarrow C$, there exists a unique $(T,S)$-map $\bar{\beta}: A\otimes_{U}B \rightarrow C$ such that the diagram
\begin{equation*}
\xymatrix{A\times B \ar[d]_{\beta} \ar[r]^{\psi} & A \otimes_{U} B \ar[ld]^{\bar{\beta}}\\C
}
\end{equation*}
commutes.  It is proved in \cite{Howie} that $A\otimes_{U}B$ exists and is unique up to isomorphism. The existence theorem reveals that $A\otimes_{U}B=(A \times B)/\tau$ where $\tau$ is the equivalence on $A \times B$ generated by the relation
\begin{equation*}
T=\{ ((au,b),(a,ub)): a \in A, b \in B, u \in U\}.
\end{equation*}
The equivalence class of a pair $(a,b)$ is usually denoted by $a \otimes_{U}b$. To us is of interest the situation when $A=S=B$ where $S$ is a monoid and $U$ is a submonoid of $S$. Here $A$ is clearly regarded as an $(S,U)$-bisystem with $U$ acting on the right on $A$ by multiplication, and $B$ as an $(U,S)$-bisystem where $U$ acts on the left on $B$ by multiplication. 

Another concept that is important to our approach is that of the dominion which is defined in \cite{Isbell} from Isbell. By definition, if $U$ is a submonoid of a monoid $S$, then the dominion $\text{Dom}_{S}(U)$ consists of all the elements $d \in S$ having the property that for every monoid $T$ and every pair of monoid homomorphisms $f,g: S \rightarrow T$ that coincide in $U$, it follows that $f(d)=g(d)$. Related to dominions there is the well known zigzag theorem of Isbell. We will present here the Stenstrom version of it (theorem 8.3.3 of \cite{Howie}) which reads. \textit{Let $U$ be a submonoid of a monoid $S$ and let $d \in S$. Then, $d \in \text{Dom}_{S}(U)$ if and only if $d \otimes_{U}1=1\otimes_{U}d$ in the tensor product $A=S \otimes_{U}S$}. We mention here that this result holds true if $S$ turns out to be a group and $U$ a subgroup, both regarded as monoids. A key result (theorem 8.3.6 of \cite{Howie}) that is used in the next section is the fact that any inverse semigroup $U$ is absolutely closed in the sense that for every  semigroup $S$ containing $U$ as a subsemigroup, $\text{Dom}_{S}(U)=U$. It is obvious that groups are absolutely closed as special cases of inverse monoids (see \cite{epidom2}).

\section{Peiffer operations and monoid actions} \label{poma}

Before we explain how monoid actions are used to deal with the Peiffer operations on $Y$-sequences, we will introduce several monoids. 

The first one is the monoid $\Upsilon$ defined by the monoid presentation $\mathcal{MP}( Y \cup Y^{-1}, P )$ where $Y^{-1}$ is the set of group inverses of the elements of $Y$ and $P$ consists of all pairs $(ab,{^{\theta (a)}}b a)$ where $a,b \in Y\cup Y^{-1}$. 

The second one is the group $\mathcal{G}(\Upsilon)$ given by the group presentation $ (Y \cup Y^{-1}, \hat{P} )$ where $\hat{P}$ is the set of all words $ab\iota(a)\iota(^{\theta(a)}b)$ where by $\iota(c)$ we denote the inverse of $c$ in the free group over $Y\cup Y^{-1}$. Before we introduce the next two monoids and the respective monoid actions, we stop to explain that $\Upsilon$ and $\mathcal{G}(\Upsilon)$ are special cases of a more general situation. If a monoid $S$ is given by the monoid presentation $\mathcal{MP}( X, R )$, then its \textit{universal enveloping group} $\mathcal{G}(S)$ (see \cite{Bergman} and \cite{Cohn}) is defined to be the group given by the group presentation $( X, \hat{R} )$ where $\hat{R}$ consists of all words $u\iota(v)$ whenever $(u,v) \in R$ where $\iota(v)$ is the inverse of $v$ in the free group over $X$. We let for future use $\sigma: FM(X) \rightarrow S$ the respective canonical homomorphism where $FM(X)$ is the free monoid on $X$. It is easy to see that there is a monoid homomorphism $\mu_{S}: S \rightarrow \mathcal{G}(S)$ which satisfies the following universal property. For every group $G$ and monoid homomorphism $f: S \rightarrow G$, there is a unique group homomorphism $\hat{f}: \mathcal{G}(S) \rightarrow G$ such that $\hat{f} \mu_{S}=f$. This universal property is an indication of an adjoint situation. Specifically, the functor $\mathcal{G}:\mathbf{Mon} \rightarrow \mathbf{Grp}$ which maps every monoid to its universal group, is a left adjoint to the forgetful functor $U: \mathbf{Grp} \rightarrow \mathbf{Mon}$. This ensures that $\mathcal{G}(S)$ is an invariant of the presentation of $S$. 

The third monoid we consider is the submonoid $\mathfrak{U}$ of $\Upsilon$, having the same unit as $\Upsilon$, and is generated from all the elements of the form $\sigma(a)\sigma(a^{-1})$ with $a \in Y \cup Y^{-1}$. This monoid, acts on the left and on the right on $\Upsilon$ by the multiplication in $\Upsilon$. 

The last monoid considered is the subgroup $\hat{\mathfrak{U}}$ of $\mathcal{G}(\Upsilon)$ generated by $\mu(\mathfrak{U})$. Similarly to above, $\hat{\mathfrak{U}}$ acts on $\mathcal{G}(\Upsilon)$ by multiplication.

Given $\alpha=(a_{1},...,a_{n})$ an $Y$-sequence over the group presentation $\mathcal{P}=( \mathbf{x},\mathbf{r})$, then performing an elementary Peiffer operation on $\alpha$ can be interpreted in a simple way in terms of the monoids $\Upsilon$ and $\mathfrak{U}$. In what follows we will denote by $\sigma(\alpha)$ the element $\sigma(a_{1})\cdot \cdot \cdot \sigma(a_{n}) \in \Upsilon$. If $\beta=(b_{1},...,b_{n})$ is obtained from $\alpha=(a_{1},...,a_{n})$ by performing an elementary Peiffer exchange, then from the definition of $\Upsilon$, $\sigma(\alpha)=\sigma(\beta)$, therefore an elementary Peiffer exchange or a finite sequence of such has no effect on the element $\sigma(a_{1})\cdot \cdot \cdot \sigma(a_{n}) \in \Upsilon$. Before we see the effect that a Peiffer insertion in $\alpha$ has on $\sigma(\alpha)$ we need the first claim of the following.
\begin{lemma} \label{central}
The elements of $\mathfrak{U}$ are central in $\Upsilon$ and those of $\hat{\mathfrak{U}}$ are central in $\mathcal{G}(\Upsilon)$.
\end{lemma}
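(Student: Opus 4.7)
The plan is to verify centrality of the monoid generators of $\mathfrak{U}$ against the monoid generators of $\Upsilon$, and then transport the resulting identity into $\mathcal{G}(\Upsilon)$ through the canonical homomorphism $\mu:\Upsilon\to\mathcal{G}(\Upsilon)$. Since $\mathfrak{U}$ is generated as a monoid by elements $\sigma(a)\sigma(a^{-1})$ with $a\in Y\cup Y^{-1}$, and $\Upsilon$ is generated as a monoid by $\sigma(Y\cup Y^{-1})$, it will suffice to prove the single commutation relation $\sigma(a)\sigma(a^{-1})\sigma(b)=\sigma(b)\sigma(a)\sigma(a^{-1})$ in $\Upsilon$ for all $a,b\in Y\cup Y^{-1}$; everything else is bookkeeping.

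For this commutation I would apply the defining relation $\sigma(c)\sigma(d)=\sigma({}^{\theta(c)}d)\sigma(c)$ twice, moving $\sigma(a^{-1})$ past $\sigma(b)$ and then $\sigma(a)$ past the resulting middle letter:
\[
\sigma(a)\sigma(a^{-1})\sigma(b)=\sigma(a)\sigma\bigl({}^{\theta(a^{-1})}b\bigr)\sigma(a^{-1})=\sigma\bigl({}^{\theta(a)\theta(a^{-1})}b\bigr)\sigma(a)\sigma(a^{-1}).
\]
The decisive input is that $a$ and $a^{-1}$ are mutual inverses in the free group $H$, and $\theta:H\to F$ is a group homomorphism, so $\theta(a^{-1})=\theta(a)^{-1}$ and hence $\theta(a)\theta(a^{-1})=1_{F}$. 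Because the action of $F$ on $Y\cup Y^{-1}$ is trivial at $1_{F}$, the rightmost expression collapses to $\sigma(b)\sigma(a)\sigma(a^{-1})$. This gives commutation of each generator of $\mathfrak{U}$ with every generator of $\Upsilon$, and hence centrality of $\mathfrak{U}$ in the whole of $\Upsilon$.

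For the second claim I would simply apply $\mu$ to the identity just established, obtaining commutation of $\mu(\sigma(a)\sigma(a^{-1}))$ with each $\mu\sigma(b)$ in $\mathcal{G}(\Upsilon)$. In a group, commuting with an element is equivalent to commuting with its inverse, so $\mu(\sigma(a)\sigma(a^{-1}))$ commutes with every element of $\mu\sigma(Y\cup Y^{-1})\cup \mu\sigma(Y\cup Y^{-1})^{-1}$, which generates $\mathcal{G}(\Upsilon)$ by construction of the presentation $(Y\cup Y^{-1},\hat{P})$. Consequently $\mu(\mathfrak{U})$, and therefore the subgroup $\hat{\mathfrak{U}}$ it generates, lies in the centre of $\mathcal{G}(\Upsilon)$. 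There is no genuine obstacle: the only technical step is the two-fold rewrite inside $\Upsilon$ together with the homomorphism property of $\theta$, and the remainder of the argument is purely formal.
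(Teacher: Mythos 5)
Your proof is correct and follows essentially the same route as the paper: the paper's proof is exactly the computation $\sigma(a)\sigma(a^{-1})\sigma(b)={}^{\theta(a)\theta(a^{-1})}\sigma(b)\,\sigma(a)\sigma(a^{-1})=\sigma(b)\sigma(a)\sigma(a^{-1})$, which you have merely unpacked into two applications of the defining relation, and the paper likewise disposes of the $\hat{\mathfrak{U}}$ statement by noting it "follows easily" from applying $\mu$. Your explicit remarks that $\theta(a)\theta(a^{-1})=1_{F}$ acts trivially on the symbols and that commuting with group generators and their inverses suffices in $\mathcal{G}(\Upsilon)$ are exactly the details the paper leaves implicit.
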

\begin{proof}
We see that for every $a \text{ and } b \in Y \cup Y^{-1}$, $\sigma(a)\sigma(a^{-1})\sigma(b)=\sigma(b)\sigma(a)\sigma(a^{-1})$. Indeed,
\begin{align*}
\sigma(a)\sigma(a^{-1})\sigma(b)&=~^{\theta(a)\theta(a^{-1})}{\sigma(b)}(\sigma(a)\sigma(a^{-1}))\\
&=\sigma(b)\sigma(a)\sigma(a^{-1}).
\end{align*}
Since elements $\sigma(b)$ and $\sigma(a)\sigma(a^{-1})$ are generators of $\Upsilon$ and $\mathfrak{U}$ respectively, then the first claim holds true. The second claim follows easily.
\end{proof}

If we insert $(a,a^{-1})$ at some point in $\alpha=(a_{1},...,a_{n})$ to obtain $\alpha'=(a_{1},...,a,a^{-1},...,a_{n})$, then from lemma \ref{central}, 
\begin{equation*}
\sigma(\alpha')=\sigma(\alpha) \cdot (\sigma(a)\sigma(a^{-1})),
\end{equation*}
which means that inserting $(a,a^{-1})$ inside a $Y$-sequence $\alpha$ has the same effect as multiplying the corresponding $\sigma(\alpha)$ in $\Upsilon$ by the element $\sigma(a)\sigma(a^{-1})$ of $\mathfrak{U}$. For the converse, it is obvious that any word $\beta \in FM(Y \cup Y^{-1})$ representing $\sigma(\alpha)\cdot (\sigma(a)\sigma(a^{-1}))$ is Peiffer equivalent to $\alpha$. Of course the deletion has the obvious interpretation in our semigroup theoretic terms as the inverse of the above process. We retain the same names for our semigroup operations, that is insertion for multiplication by $\sigma(a)\sigma(a^{-1})$ and deletion for its inverse. Related to these operations on the elements of $\Upsilon$ we make the following definition.
\begin{definition}
We denote by $\sim_{\mathfrak{U}}$ the equivalence relation in $\Upsilon$ generated by all pairs $(\sigma(\alpha),\sigma(\alpha)\cdot \sigma(a)\sigma(a^{-1}))$ where $\alpha \in \text{FM}(Y\cup Y^{-1})$ and $a \in Y \cup Y^{-1}$. We say that two elements $\sigma(a_{1})\cdot \cdot \cdot \sigma(a_{n})$ and $\sigma(b_{1})\cdot \cdot \cdot \sigma(b_{m})$ where $m,n \geq 0$ are \textit{Peiffer equivalent in $\Upsilon$} if they fall in the same $\sim_{\mathfrak{U}}$-class.
\end{definition}
From what we said before it is obvious that two $Y$-sequences $\alpha$ and $\beta$ are Peiffer equivalent in the usual sense if and only if $\sigma(\alpha)\sim_{\mathfrak{U}} \sigma(\beta)$. For this reason we decided to make the following convention. If $\alpha=(a_{1},...,a_{n})$ is a $Y$-sequence (resp. an identity $Y$-sequence), then its image in $\Upsilon$, $\sigma(\alpha)$ will again be called a $Y$-sequence (resp. an identity $Y$-sequence). In the future instead of working directly with an $Y$-sequence $\alpha$, we will work with its image $\sigma(\alpha)$.

We note that it should be mentioned that the study of $\sim_{\mathfrak{U}}$ might be as hard as the study of Peiffer operations on $Y$-sequences, and at this point it seems we have not made any progress at all. In fact this definition will become useful later in this section and yet we have to prove a few more things before we utilize it.

The process of inserting and deleting generators of $\mathfrak{U}$ in an element of $\Upsilon$ is related to the following new concept. Given $U$ a submonoid of a monoid $S$ and $d \in S$, then we say that $d$ belongs to the \textit{weak dominion of} $U$, shortly written as $d \in \text{WDom}_{S}(U)$, if for every group $G$ and every monoid homomorphisms $f,g:S \rightarrow G$ such that $f(u)=g(u)$ for every $u \in U$, then $f(d)=g(d)$. An analogue of the Stenstr\"{o}m version of Isbell's theorem for weak dominion holds true. The proof of the if part of its analogue is similar to that of Isbell theorem apart from some minor differences that reflect the fact that we are working with $WDom$ rather than $Dom$ and that will become clear along the proof, while the converse relies on the universal property of $\mu: S \rightarrow \mathcal{G}(S)$.  
\begin{proposition} \label{wd prop}
Let $S$ be a monoid, $U$ a submonoid and let $\hat{U}$ be the subgroup of $\mathcal{G}(S)$ generated by elements $\mu(u)$ with $u \in U$. Then $d \in \text{WDom}_{S}(U)$ if and only if $\mu(d) \in \hat{U}$.
\end{proposition}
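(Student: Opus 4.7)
The plan is to treat the two implications separately, using the universal property of $\mu : S \to \mathcal{G}(S)$ for one direction and a group-theoretic adaptation of Isbell's zigzag construction for the other.

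For the direction $\mu(d) \in \hat{U} \Rightarrow d \in \text{WDom}_{S}(U)$, I would argue directly from the universal property recalled in Section \ref{ma}. Given any group $G$ and any pair of monoid homomorphisms $f, g : S \to G$ that coincide on $U$, each factors uniquely through $\mu$ as $f = \hat{f}\mu$, $g = \hat{g}\mu$ for group homomorphisms $\hat{f}, \hat{g} : \mathcal{G}(S) \to G$. Since $\hat{f}$ and $\hat{g}$ agree on $\mu(U)$ and are both homomorphisms of groups, they must also agree on the subgroup $\hat{U}$ generated by $\mu(U)$. If $\mu(d) \in \hat{U}$, then $f(d) = \hat{f}(\mu(d)) = \hat{g}(\mu(d)) = g(d)$, whence $d \in \text{WDom}_{S}(U)$.

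For the converse $d \in \text{WDom}_{S}(U) \Rightarrow \mu(d) \in \hat{U}$, I would imitate Isbell's construction in the category of groups. I would form the amalgamated free product $G := \mathcal{G}(S) *_{\hat{U}} \mathcal{G}(S)$ of two copies of $\mathcal{G}(S)$ along the common subgroup $\hat{U}$, and denote by $i_{1}, i_{2} : \mathcal{G}(S) \to G$ the two canonical embeddings. Setting $f_{j} := i_{j} \circ \mu$ for $j \in \{1, 2\}$ yields a pair of monoid homomorphisms $S \to G$ that agree on $U$, because $\mu(U) \subseteq \hat{U}$ and $i_{1}, i_{2}$ coincide on $\hat{U}$ by the very construction of the amalgam. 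The hypothesis $d \in \text{WDom}_{S}(U)$ then forces $f_{1}(d) = f_{2}(d)$, i.e.\ $i_{1}(\mu(d)) = i_{2}(\mu(d))$ in $G$. By the normal form theorem for amalgamated free products of groups, an element $x \in \mathcal{G}(S)$ satisfies $i_{1}(x) = i_{2}(x)$ in $\mathcal{G}(S) *_{\hat{U}} \mathcal{G}(S)$ if and only if $x$ lies in the amalgamated subgroup $\hat{U}$; this yields $\mu(d) \in \hat{U}$.

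The delicate point, and in my view the main obstacle, lies in selecting the correct group-theoretic counterpart of the monoid tensor product $S \otimes_{U} S$ that features in Stenstr\"om's formulation. Since we insist on landing in a group, the bisystem $S \otimes_{U} S$ is no longer directly appropriate; the amalgamated free product $\mathcal{G}(S) *_{\hat{U}} \mathcal{G}(S)$ plays the role of the universal object in the category of groups under $\hat{U}$ and produces exactly the two maps agreeing on $U$ that an Isbell-style argument requires, while the normal form theorem supplies the precise criterion translating the equality of the two images of $\mu(d)$ into the membership $\mu(d) \in \hat{U}$.
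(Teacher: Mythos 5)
Your proof is correct, and the ``if'' direction coincides with the paper's argument (both simply pass to $\hat{f},\hat{g}:\mathcal{G}(S)\to G$ via the universal property and note that group homomorphisms agreeing on $\mu(U)$ agree on the subgroup $\hat{U}$ it generates). For the ``only if'' direction, however, you take a genuinely different route. The paper builds the group $\mathcal{G}(S)\times\mathbb{Z}\hat{A}$ with $\hat{A}=\mathcal{G}(S)\otimes_{\hat{U}}\mathcal{G}(S)$ (a semidirect product with the free abelian group on the tensor product), produces two monoid homomorphisms $\beta,\gamma:S\to\mathcal{G}(S)\times\mathbb{Z}\hat{A}$ agreeing on $U$, deduces $\mu(d)\otimes_{\hat{U}}1=1\otimes_{\hat{U}}\mu(d)$, and then invokes Stenstr\"om's tensor-product criterion (Theorem 8.3.3 of Howie) together with the absolute closedness of inverse semigroups (Theorem 8.3.6) to conclude $\mu(d)\in\mathrm{Dom}_{\mathcal{G}(S)}(\hat{U})=\hat{U}$. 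You instead work with the pushout in the category of groups: the amalgam $\mathcal{G}(S)*_{\hat{U}}\mathcal{G}(S)$ supplies the two test homomorphisms $i_{1}\mu,i_{2}\mu$ agreeing on $U$, and the normal form theorem shows that the equalizer of $i_{1}$ and $i_{2}$ is exactly $\hat{U}$ (if $x\notin\hat{U}$ then $i_{1}(x)i_{2}(x)^{-1}$ is a reduced word of length two, hence nontrivial). Conceptually the two arguments meet at the same point --- both ultimately rest on the fact that a subgroup of a group equals its own dominion --- but yours reaches it directly through the pushout and Schreier normal forms, making the group case self-contained and avoiding the $S$-system and tensor-product machinery, whereas the paper's version stays closer to the semigroup-theoretic toolkit it uses elsewhere and delegates the closure statement to cited results. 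Either proof is acceptable; yours trades the black-box citations for a dependence on the normal form theorem for amalgamated products.
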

\begin{proof}
The set $\hat{A}=\mathcal{G}(S)\otimes_{\hat{U}} \mathcal{G}(S)$ has an obvious $(\mathcal{G}(S),\mathcal{G}(S))$-bisystem structure. The free abelian group $\mathbb{Z}\hat{A}$ on $\hat{A}$ inherits a $(\mathcal{G}(S),\mathcal{G}(S))$-bisystem structure if we define
\begin{equation*}
g \cdot \sum z_{i}(g_{i} \otimes_{\hat{U}}h_{i})=\sum z_{i}(gg_{i}\otimes_{\hat{U}}h_{i}) \text{ and } \left(\sum z_{i} (g_{i} \otimes_{\hat{U}}h_{i})\right)\cdot g=\sum z_{i}(g_{i} \otimes_{\hat{U}}h_{i}g).
\end{equation*}
The set $\mathcal{G}(S) \times \mathbb{Z}\hat{A}$ becomes a group by defining 
\begin{equation*}
(g,\sum z_{i} g_{i} \otimes_{\hat{U}} h_{i})\cdot (g',\sum z'_{i} g'_{i} \otimes_{\hat{U}}h'_{i})=(gg', \sum z_{i} g_{i} \otimes_{\hat{U}} h_{i}g'+\sum z'_{i} gg'_{i} \otimes_{\hat{U}}h'_{i}).
\end{equation*}
The associativity is proved easily. The unit element is $(1,0)$ and for every $(g,\sum z_{i} g_{i} \otimes_{\hat{U}} h_{i})$ its inverse is the element $(g^{-1},-\sum z_{i} g^{-1}g_{i} \otimes_{\hat{U}} h_{i} g^{-1})$. Let us now define
\begin{equation*}
\beta: S \rightarrow \mathcal{G}(S) \times \mathbb{Z}\hat{A} \text{ by } s\mapsto (\mu(s),0),
\end{equation*}
which is clearly a monoid homomorphism, and
\begin{equation*}
\gamma: S \rightarrow \mathcal{G}(S) \times \mathbb{Z}\hat{A} \text{ by } s \mapsto (\mu(s), \mu(s) \otimes_{\hat{U}}1-1\otimes_{\hat{U}} \mu(s)),
\end{equation*}
which is again seen to be a monoid homomorphism. These two coincide on $U$ since for every $u \in U$
\begin{equation*}
\gamma(u)=(\mu(u),\mu(u) \otimes_{\hat{U}}1-1\otimes_{\hat{U}}\mu(u))=(\mu(u),0)=\beta(u).
\end{equation*}
The last equality and the assumption that $d \in \text{WDom}_{S}(U)$ imply that $\beta(d)=\gamma(d)$, therefore
\begin{equation*}
(\mu(d),0)=(\mu(d),\mu(d)\otimes_{\hat{U}} 1-1 \otimes_{\hat{U}} \mu(d)),
\end{equation*}
which shows that $\mu(d)\otimes_{\hat{U}} 1=1 \otimes_{\hat{U}} \mu(d)$ in the tensor product $\mathcal{G}(S)\otimes_{\hat{U}} \mathcal{G}(S)$ and therefore theorem 8.3.3, \cite{Howie}, applied for monoids $\mathcal{G}(S)$ and $\hat{U}$, implies that $\mu(d) \in \text{Dom}_{\mathcal{G}(S)}(\hat{U})$. But $\text{Dom}_{\mathcal{G}(S)}(\hat{U})=\hat{U}$ as from theorem 8.3.6, \cite{Howie} every inverse semigroup is absolutely closed, whence $\mu(d) \in \hat{U}$.

Conversely, suppose that $\mu(d) \in \hat{U}$ and we want to show that $d \in \text{WDom}_{S}(U)$. Let $G$ be a group and $f,g: S \rightarrow G$ two monoid homomorphisms that coincide in $U$, therefore the group homomorphisms $\hat{f}, \hat{g}: \mathcal{G}(S) \rightarrow G$ of the universal property of $\mu$ coincide in $\hat{U}$ which, from our assumption, implies that $\hat{f}(\mu(d))=\hat{g}(\mu(d))$, and then $f(d)=g(d)$ proving that $d \in \text{WDom}_{S}(U)$.
\end{proof}

Given a presentation $\mathcal{P}=( \mathbf{x},\mathbf{r})$ for a group $G$, we consider the following crossed module. If $\mathcal{G}(\Upsilon)$ is the universal group associated with $\mathcal{P}$ and $F$ is the free group on $\mathbf{x}$, then we define 
\begin{equation*}
\tilde{\theta}: \mathcal{G}(\Upsilon) \rightarrow F \text{ by } \mu\sigma(^{u}{r})^{\varepsilon} \mapsto ur^{\varepsilon}u^{-1}.
\end{equation*}
An action of $F$ on $\mathcal{G}(\Upsilon)$ is given by $^{v}(\mu \sigma (^{u}r)^{\varepsilon})=\mu \sigma(^{vu}r)^{\varepsilon}$ for every $v \in F$ and every generator $\mu \sigma((^{u}r)^{\varepsilon})$ of $\mathcal{G}(\Upsilon)$. It is easy to check that the triple $(\mathcal{G}(\Upsilon),F,\tilde{\theta})$ is a crossed module over $F$. The elements of $\text{Ker}(\tilde{\theta})$ are central, therefore $\text{Ker}(\tilde{\theta})$ is an abelian subgroup of $\mathcal{G}(\Upsilon)$ on which $G$ acts on the left by the rule 
$$^{g} (\mu \sigma(a_{1},...,a_{n})\iota\mu \sigma (b_{1},...,b_{m}))=\mu \sigma (^{w}{a_{1}},...,^{w}{a_{n}})\iota \mu \sigma(^{w}{b_{1}},...,^{w}{b_{m}}),$$
where $w$ is a word in $FG(\mathbf{x})$ representing $g$. With this action $\text{Ker}(\tilde{\theta})$ becomes a left $G$-module which we call \textit{the generalized module of identities for} $\mathcal{P}$ and is denoted by $\tilde{\Pi}$. Also we note that $\hat{\mathfrak{U}}$ is a sub $G$-module of $\tilde{\Pi}$. The module of identities $\pi$ for $\mathcal{P}$ is obtained from $\tilde{\Pi}$ by factoring out $\hat{\mathfrak{U}}$. In terms of $\tilde{\Pi}$ and $\hat{\mathfrak{U}}$ we prove the following analogue of theorem 3.1 of \cite{papa-attach}.

\begin{theorem} \label{wdom}
The following assertions are equivalent.
\begin{itemize}
\item [(i)] The presentation $\mathcal{P}=( \mathbf{x},\mathbf{r})$ is aspherical.
\item [(ii)] For every identity $Y$-sequence $d$, $d \in \text{WDom}_{\Upsilon}(\mathfrak{U})$.
\item [(iii)] $\tilde{\Pi}=\hat{\mathfrak{U}}$.
\end{itemize}
\end{theorem}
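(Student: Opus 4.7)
The plan is to establish (ii) $\Leftrightarrow$ (iii) directly from Proposition \ref{wd prop} combined with a reduction argument based on the centrality of $\hat{\mathfrak{U}}$ (Lemma \ref{central}), and then to establish (i) $\Leftrightarrow$ (iii) by exhibiting $H/P$ as the quotient of $\mathcal{G}(\Upsilon)$ by the central subgroup $\hat{\mathfrak{U}}$. The connection to asphericity will then come for free from Proposition 14 of \cite{BH}.

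For (iii) $\Rightarrow$ (ii), given any identity $Y$-sequence $d$ we have $\mu(d) \in \tilde{\Pi} = \hat{\mathfrak{U}}$, and Proposition \ref{wd prop} yields $d \in \text{WDom}_{\Upsilon}(\mathfrak{U})$. For the converse, the inclusion $\hat{\mathfrak{U}} \subseteq \tilde{\Pi}$ being already observed, it suffices to show $\tilde{\Pi} \subseteq \hat{\mathfrak{U}}$. Given $z \in \tilde{\Pi}$, I would exploit centrality of $\hat{\mathfrak{U}}$ together with the relation $\mu\sigma(y)^{-1} = \mu\sigma(y^{-1}) \cdot (\mu\sigma(y)\mu\sigma(y^{-1}))^{-1}$ to rewrite $z$ in the form $\mu(a) \cdot u$ with $a \in \Upsilon$ and $u \in \hat{\mathfrak{U}}$. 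Applying $\tilde{\theta}$ shows that $a$ must be an identity $Y$-sequence, and then (ii) combined with Proposition \ref{wd prop} forces $\mu(a) \in \hat{\mathfrak{U}}$, so $z \in \hat{\mathfrak{U}}$.

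For (i) $\Leftrightarrow$ (iii) I would introduce the surjective group homomorphism $\phi: \mathcal{G}(\Upsilon) \to H/P$ defined on generators by $\mu\sigma(^{u}r) \mapsto (^{u}r)P$ and $\mu\sigma((^{u}r)^{-1}) \mapsto ((^{u}r))^{-1}P$, where the second is the genuine group inverse in $H/P$. The defining relations of $\mathcal{G}(\Upsilon)$ translate into Peiffer relations that already hold in $H/P$, so $\phi$ is well-defined, and by construction $\delta \circ \phi = \tilde{\theta}$; hence $\phi$ restricts to a surjection $\tilde{\Pi} \twoheadrightarrow \pi$. Every generator $\mu\sigma(y)\mu\sigma(y^{-1})$ of $\hat{\mathfrak{U}}$ maps to $y\cdot y^{-1} = 1$ in $H/P$, so $\hat{\mathfrak{U}} \subseteq \ker \phi$. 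For the reverse inclusion I would apply the same reduction-modulo-$\hat{\mathfrak{U}}$ trick to bring a kernel element into the form $\mu(a)$ with $a \in \Upsilon$, and then use a careful comparison of the presentations of $\mathcal{G}(\Upsilon)$ and of $H/P$ to conclude that $a \in \mathfrak{U}$, so $\mu(a) \in \hat{\mathfrak{U}}$. This gives $\tilde{\Pi}/\hat{\mathfrak{U}} \cong \pi$, and Proposition 14 of \cite{BH} then yields: $\mathcal{P}$ is aspherical iff $\pi = 0$ iff $\tilde{\Pi} = \hat{\mathfrak{U}}$.

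The main obstacle I anticipate is the equality $\ker \phi = \hat{\mathfrak{U}}$. The inclusion $\hat{\mathfrak{U}} \subseteq \ker \phi$ is straightforward, but the reverse inclusion requires arguing that the only additional relations imposed in passing from $\mathcal{G}(\Upsilon)$ to $H/P$ are the identifications $\mu\sigma(y)\mu\sigma(y^{-1}) = 1$. The centrality of $\hat{\mathfrak{U}}$, which ensures that its normal closure in $\mathcal{G}(\Upsilon)$ coincides with $\hat{\mathfrak{U}}$ itself, should be the crucial ingredient that makes this direct comparison of the two presentations feasible.
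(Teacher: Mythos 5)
Your proposal is correct, and while the equivalence (ii)$\Leftrightarrow$(iii) proceeds essentially as in the paper (reduce an element of $\tilde{\Pi}$ modulo the central subgroup $\hat{\mathfrak{U}}$ to something in $\mu(\Upsilon)$, observe it is an identity $Y$-sequence, and apply Proposition \ref{wd prop}), your treatment of the link with asphericity is genuinely different. The paper closes the cycle with two separate arguments: (i)$\Rightarrow$(ii) by induction on the minimal number of Peiffer insertions/deletions reducing an identity $Y$-sequence to $1$ (using condition (iv) of Proposition 14), and (iii)$\Rightarrow$(i) by comparing $\nu\mu$ with the trivial morphism $\Upsilon \rightarrow H/P$ via the weak dominion property and then citing theorem 2.7 of \cite{cha}. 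You instead establish the single structural fact $\ker\phi=\hat{\mathfrak{U}}$ for the quotient $\phi:\mathcal{G}(\Upsilon)\rightarrow H/P$, hence $\tilde{\Pi}/\hat{\mathfrak{U}}\cong\pi$, and invoke Proposition 14(ii); this works because $\hat{\mathfrak{U}}$ is central (Lemma \ref{central}), so its normal closure is itself, and the presentation of $\mathcal{G}(\Upsilon)/\hat{\mathfrak{U}}$ reduces by a Tietze elimination of the generators in $Y^{-1}$ to the Peiffer presentation of $H/P$ (the relations indexed by $a,b\in Y\cup Y^{-1}$ being consequences of those indexed by $a,b\in Y$ once genuine inverses are available, by the standard crossed-module computation). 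Your route buys a cleaner and stronger statement --- the isomorphism $\tilde{\Pi}/\hat{\mathfrak{U}}\cong\pi$ of $G$-modules, which quantifies the failure of (iii) and is in the spirit of what the author needs later in Section \ref{cha} --- at the cost of the presentation comparison; the paper's route stays entirely inside the semigroup-theoretic framework of Peiffer moves and weak dominions and makes the connection with condition (iv) of Proposition 14 explicit. One small slip: in your reverse-inclusion sketch you should aim to conclude $\mu(a)\in\hat{\mathfrak{U}}$ rather than $a\in\mathfrak{U}$ (the latter is stronger than needed and not what the argument yields); with the Tietze-transformation argument above this step is in fact unnecessary, since $\ker\phi=\hat{\mathfrak{U}}$ follows directly.
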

\begin{proof}
$(i) \Rightarrow (ii)$ Let $d=\sigma(a_{1})\cdot \cdot \cdot \sigma(a_{n}) \in \Upsilon$ be any identity $Y$-sequence and as such it has to be Peiffer equivalent to 1. We proceed by showing that $d \in \text{WDom}_{\Upsilon}(\mathfrak{U})$. Let $G$ be any group and $f,g:\Upsilon \rightarrow G$ two monoid homomorphisms that coincide in $\mathfrak{U}$ and we want to show that $f(d)=g(d)$. The proof will be done by induction on the minimal number $h(d)$ of insertions and deletions needed to transform $d=\sigma(a_{1})\cdot \cdot \cdot \sigma(a_{n})$ to $1$. If $h(d)=1$, then $d \in \mathfrak{U}$ and $f(d)=g(d)$. Suppose that $h(d)=n>1$ and let $\tau$ be the first operation performed on $d$ in a series of operations of minimal length. After $\tau$ is performed on $d$, it is obtained an element $d'$ with $h(d')=n-1$. By induction hypothesis, $f(d')=g(d')$ and we want to prove that $f(d)=g(d)$. There are two possible cases for $\tau$. First, $\tau$ is an insertion and let $u=\sigma(a)\sigma(a^{-1}) \in \mathfrak{U}$ be the element inserted. It follows that $f(d')=f(d)f(u)$ and $g(d')=g(d)g(u)$, but $f(u)=g(u)$, therefore from cancellation law in the group $G$ we get $f(d)=g(d)$. Second, $\tau$ is a deletion and let $u=\sigma(a)\sigma(a^{-1}) \in \mathfrak{U}$ be the element deleted, that is $d=d'u$. It follows immediately from the assumptions that $f(d)=g(d)$ proving that $d \in \text{WDom}_{\Upsilon}(\mathfrak{U})$.

$(ii) \Rightarrow (iii)$ Let $\tilde{d} \in \tilde{\Pi}$. We may assume without loss of generality that no $\iota(\mu\sigma(^{u}{r})^{\varepsilon})$ is represented in $\tilde{d}$ for if there is any such occurrence, we can multiply $\tilde{d}$ by $\mu\sigma((^{u}{r})^{\varepsilon}(^{u}{r})^{-\varepsilon})$ to obtain in return $\tilde{d}'$ where $\iota(\mu\sigma(^{u}{r})^{\varepsilon})$ is now replaced by $\mu\sigma((^{u}{r})^{-\varepsilon})$. It is obvious that if $\tilde{d}' \in \hat{\mathfrak{U}}$, then $\tilde{d} \in \hat{\mathfrak{U}}$ and conversely. Let now $d$ be any preimage of $\tilde{d}$ under $\mu$. It is clear that $d$ is an identity $Y$-sequence and as such $d \in \text{WDom}_{\Upsilon}(\mathfrak{U})$. Then proposition \ref{wd prop} implies that $\tilde{d}=\mu(d) \in \hat{\mathfrak{U}}$.

$(iii) \Rightarrow (i)$ Assume that $\tilde{\Pi}=\hat{\mathfrak{U}}$ and we want to show that any identity $Y$-sequence $d$ is Peiffer equivalent to 1. From the assumption for $d$ we have that $\mu(d) \in \hat{\mathfrak{U}}$ and then proposition \ref{wd prop} implies that $d \in \text{WDom}_{\Upsilon}(\mathfrak{U})$. Consider the group $H/P$ as a quotient of $\mathcal{G}(\Upsilon)$ obtained by identifying $\iota(\mu\sigma({^{u}{r}}))$ with $\mu\sigma((^{u}{r})^{-1})$ and let $\nu:\mathcal{G}(\Upsilon) \rightarrow H/P$ be the respective quotient morphism. Writing $\tau$ for the zero morphism from $\Upsilon$ to $H/P$, we see that $\tau$ and the composition $\nu\mu$ coincide in $\mathfrak{U}$, therefore since $d \in \text{WDom}_{\Upsilon}(\mathfrak{U})$, it follows that $\nu\mu(d)=1$ in $H/P$. The asphericity of $\mathcal{P}$ now follows from theorem 2.7, p.71 of \cite{cha}.
\end{proof} 

Before we prove our next result we recall the definition of the relation module $\mathcal{N}(\mathcal{P})$. Given $\mathcal{P}=( \mathbf{x},\mathbf{r} )$ a presentation for a group $G$, we let $\alpha: FG(\mathbf{x}) \rightarrow G$ and $\beta: N \rightarrow N/[N,N]$ be the canonical homomorphisms where $N$ is the normal closure of $\mathbf{r}$ in $FG(\mathbf{x})$ and $[N,N]$ its commutator subgroup. There is a well defined $G$-action on $\mathcal{N}(\mathcal{P})=N/[N,N]$ given by
\begin{equation*}
w^{\alpha}\cdot s^{\beta}=(w^{-1}sw)^{\beta}
\end{equation*}
for every $w\in FG(\mathbf{x})$ and $s \in N$. This action extends to an action of $\mathbb{Z}G$ over $\mathcal{N}(\mathcal{P})$ by setting
\begin{equation*}
(w_{1}^{\alpha} \pm w_{2}^{\alpha})\cdot s^{\beta}=(w_{1}^{-1}sw_{1}w_{2}^{-1}s^{\pm1}w_{2})^{\beta}.
\end{equation*}
When $\cP$ is aspherical, the basis of $\mathcal{N}(\mathcal{P})$ as a free $\mathbb{Z}G$ module is the set of elements $r^{\beta}$ with $r \in \mathbf{r}$.

\begin{proposition} \label{free}
If $\mathcal{P}$ is aspherical, then $\hat{\mathfrak{U}}$ is a free $G$-module with bases equipotent to the set $\mathbf{r}$. 
\end{proposition}
\begin{proof}
The result follows if we show that $\hat{\mathfrak{U}} \cong \mathcal{N}(\mathcal{P})$ as $G$-modules. For this we define
$$\Omega: \mathcal{N}(\mathcal{P}) \rightarrow \hat{\mathfrak{U}}$$
on free generators by $r^{\beta} \mapsto \mu \sigma (rr^{-1})$ which is clearly well defined and a surjective morphism of $G$-modules. Now we prove that $\Omega$ is injective. Let 
$$\xi= \sum_{i=1}^{n} u_{i}^{\alpha}\cdot r_{i}^{\beta} - \sum_{j=n+1}^{m} v_{j}^{\alpha} \cdot r_{j}^{\beta} \in \text{Ker}(\Omega),$$
which means that
\begin{equation} \label{ker}
\prod_{i=1}^{n} \mu \sigma (^{u_{i}}r_{i}(^{u_{i}}r_{i})^{-1}) \iota \left( \prod_{j=n+1}^{m} \mu \sigma (^{v_{j}}r_{j}(^{v_{j}}r_{j})^{-1})  \right)=1.
\end{equation}
To prove that $\xi=0$ we will proceed as follows. Define 
\begin{equation*}
\gamma: FM(Y \cup Y^{-1}) \rightarrow \mathcal{N}(\mathcal{P}) 
\end{equation*}
on free generators as follows
\begin{equation*}
(^{u}r)^{\varepsilon} \mapsto u^{\alpha}\cdot r^{\beta}.
\end{equation*}
It is easy to see that $\gamma$ is compatible with the defining relations of $\Upsilon$, hence there is $g:\Upsilon \rightarrow \mathcal{N}(\mathcal{P})$ and then the universal property of $\mu$ implies the existence of $\hat{g}: \mathcal{G}(\Upsilon) \rightarrow \mathcal{N}(\mathcal{P})$ such that $\hat{g}\mu=g$. If we apply now $\hat{g}$ on both sides of (\ref{ker}) obtain
$$2 \cdot \sum_{i=1}^{n} u_{i}^{\alpha}\cdot r_{i}^{\beta} - 2 \cdot \sum_{j=n+1}^{m} v_{j}^{\alpha} \cdot r_{j}^{\beta}=0,$$
proving that $\xi=0$.
\end{proof}

Let $\chi: \mathcal{G}(\Upsilon) \rightarrow \hat{\mathfrak{U}}$ be the map defined on generators by $\mu \sigma((^{u}r)^{\varepsilon}) \mapsto \mu \sigma((^{u}r)^{\varepsilon} (^{u}r)^{-\varepsilon})$. It is easy to see that this is a well defined homomorphism of groups. Also we see that $\chi(\mu \sigma((^{u}r)^{\varepsilon}(^{u}r)^{-\varepsilon}))=\mu \sigma((^{u}r)^{\varepsilon}(^{u}r)^{-\varepsilon})^{2}$. This follows easily from the fact that $\mu \sigma ((^{u}r)^{\varepsilon} (^{u}r)^{-\varepsilon})=\mu \sigma ((^{u}r)^{-\varepsilon} (^{u}r)^{\varepsilon})$. The latter can be seen by taking the conjugate $\iota\mu\sigma((^{u}r)^{\varepsilon}) \cdot \mu \sigma ((^{u}r)^{\varepsilon} (^{u}r)^{-\varepsilon}) \cdot \mu \sigma((^{u}r)^{\varepsilon})$ and then using the fact that $\mu \sigma ((^{u}r)^{\varepsilon} (^{u}r)^{-\varepsilon})$ is central.

We denote by $\mathfrak{J}$ the image of the restriction of $\chi$ on $\tilde{\Pi}$ and let $\bar{\chi}=\chi |_{\tilde{\Pi}}$, therefore we have the short exact sequence of $G$-modules
\begin{equation} \label{ses}
\xymatrix{0 \ar[r] & K \ar[r]^{\iota} & \tilde{\Pi} \ar[r]^{\bar{\chi}} & \mathfrak{J} \ar[r] & 0},
\end{equation}
where $K$ is the kernel of $\bar{\chi}$.

Define the following subset of $\hat{\mathfrak{U}}$
$$\mathfrak{B}=\{ ^{a}\mu \sigma (rr^{-1})| a \in G \text{ and } r \in \mathbf{r}\}.$$
We have already seen in proposition \ref{free} that when $\mathcal{P}$ is aspherical, then $\hat{\mathfrak{U}}$ is a free abelian group with bases equipotent to $\mathfrak{B}$.

\begin{lemma} \label{sub}
If $\hat{\mathfrak{U}}$ is a free abelian group with bases $\mathfrak{B}$, then $\mathfrak{J}$ is isomorphic to $\hat{\mathfrak{U}}$.
\end{lemma}
\begin{proof}
Before we prove that $\mathfrak{J}$ and $\hat{\mathfrak{U}}$ are isomorphic, we will find a bases $\mathfrak{C}$ for $\mathfrak{J}$ and to this end we will use the standard scheme of finding the bases for a subgroup of a free abelian group with known bases. Assume that $G \times \mathbf{r}$ to which $\{^{a}\mu \sigma (rr^{-1})| a \in G \text{ and } r \in \mathbf{r}\}$ is bijective to is well ordered and let $\leq$ be the assumed order. Let 
$$F_{(a,r)}=\langle ^{b}\mu \sigma (ss^{-1})| (b,s) \in G \times \mathbf{r} \text{ such that } (b,s) \leq (a,r) \rangle,$$
be the subgroup of $\hat{\mathfrak{U}}$ generated by all those bases elements with label $(b,s)$ less or equal to the given $(a,r)$. Also we let
$$F_{(a,r)}'=\langle ^{b}\mu \sigma (ss^{-1})| (b,s) \in G \times \mathbf{r} \text{ such that } (b,s) < (a,r) \rangle.$$
Let now 
$$H_{(a,r)}=F_{(a,r)} \cap \mathfrak{J} \text{ and } H'_{(a,r)}=F'_{(a,r)} \cap \mathfrak{J}.$$
It is clear that an element of infinite order of $H_{(a,r)}$ is $^{a}\mu \sigma (rr^{-1})^{2}$ and that $^{a}\mu \sigma (rr^{-1})^{2} \notin H'_{(a,r)}$. To see the latter we assume for absurd that 
$$^{a}\mu \sigma (rr^{-1})^{2}= \prod_{i=1}^{n} {^{a_{i}}}\mu \sigma(r_{i}r_{i}^{-1})  \cdot \iota \left( \prod_{j=1}^{m} {^{a_{j}}}\mu \sigma(r_{j}r_{j}^{-1}) \right),$$
where $(a_{i},r_{i})  \neq (a,r)$ for all $1\leq i \leq n$ and $(a_{j},r_{j})  \neq (a,r)$ for all $1\leq j \leq m$. But this is impossible since $\hat{\mathfrak{U}}$ is a free abelian group on $\mathfrak{B}$. So it remains that $^{a}\mu \sigma (rr^{-1})^{2} \notin H'_{(a,r)}$. This implies that any generator of $H_{(a,r)}/H'_{(a,r)}$ can be written as 
$$^{a}\mu \sigma (rr^{-1})^{k_{(a,r)}}h_{(a,r)}H'_{(a,r)}$$
where $k_{(a,r)} \in \mathbb{Z}^{\ast}$ and $h_{(a,r)}$ is a product of the form 
$$\prod_{i=1}^{n} {^{a_{i}}}\mu \sigma(r_{i}r_{i}^{-1})  \cdot \iota \left( \prod_{j=1}^{m} {^{a_{j}}}\mu \sigma(r_{j}r_{j}^{-1}) \right),$$
where $(a_{i},r_{i})  \neq (a,r)$ for all $1\leq i \leq n$ and $(a_{j},r_{j})  \neq (a,r)$ for all $1\leq j \leq m$. The general scheme for constructing a bases $\mathfrak{C}$ for $\mathfrak{J}$ as an abelian group shows that $\mathfrak{C}=\{ ^{a}\mu \sigma (rr^{-1})^{k_{(a,r)}}h_{(a,r)}| a \in G, r \in \mathbf{r}\}$. Now we can define homomorphisms
$$\psi: \hat{\mathfrak{U}} \rightarrow \mathfrak{J}$$
by
$$^{a}\mu \sigma (rr^{-1}) \mapsto {^{a}}\mu \sigma (rr^{-1})^{k_{(a,r)}}h_{(a,r)},$$
and
$$\psi' :\mathfrak{J} \rightarrow \hat{\mathfrak{U}}$$
by
$$^{a}\mu \sigma (rr^{-1})^{k_{(a,r)}}h_{(a,r)} \mapsto {^{a}}\mu \sigma (rr^{-1}).$$
It is obvious that $\psi$ and $\psi'$ are inverses of each other proving that $\hat{\mathfrak{U}}\cong \mathfrak{J}$.
\end{proof}

\begin{theorem} \label{U}
If $\hat{\mathfrak{U}}$ is a free abelian group with bases $\mathfrak{B}$, then the presentation $\mathcal{P}$ is aspherical, and conversely. 
\end{theorem}
\begin{proof}
The assumption on the freeness of $\hat{\mathfrak{U}}$ on $\mathfrak{B}$ implies by lemma \ref{sub} that $\mathfrak{J}$ is free on $\mathfrak{C}$ and then we have a section $s$ of $\bar{\chi}$. It follows that $\tilde{\Pi}=K \oplus s(\mathfrak{J})$ and that $s(\mathfrak{J}) \cong \mathfrak{J}$. Applying the result of lemma \ref{sub} again we have the isomorphism $\tilde{\Pi} \cong K \oplus \hat{\mathfrak{U}}$. If now 
$$d=\prod_{i=1}^{n} \mu \sigma ((^{u_{i}}r_{i})^{\varepsilon_{i}}) \iota \left( \prod_{j=n+1}^{m} \mu \sigma ((^{v_{j}}r_{j})^{\varepsilon_{j}})  \right) \in K,$$ 
then in $\hat{\mathfrak{U}}$ we have 
\begin{equation} \label{b}
\prod_{i=1}^{n} {^{ u_{i}^{\alpha}}} \mu \sigma (r_{i}r_{i}^{-1}) \iota \left( \prod_{j=n+1}^{m} {^{v_{j}^{\alpha}}} \mu \sigma (r_{j} r_{j}^{-1})  \right) =1.
\end{equation}
The freeness of $\hat{\mathfrak{U}}$ on $\mathfrak{B}$ implies that $m=2n$ and the indices of the terms in (\ref{b}) are paired up into pairs $(i,j)$ in such a way that $1\leq i \leq n$, $n+1 \leq j \leq m$, $r_{i}=r_{j}$ and $u_{i}=v_{j}$ modulo $N$. If we multiply both sides of $d$ by
$$ \hat{u}=\prod_{j=n+1}^{m} \mu \sigma ((^{v_{j}}r_{j})^{\varepsilon_{j}}(^{v_{j}}r_{j})^{-\varepsilon_{j}}), $$
$d$ transforms into
$$d'=\prod_{i=1}^{n} \mu \sigma ((^{u_{i}}r_{i})^{\varepsilon_{i}})  \left( \prod_{j=n+1}^{m} \mu \sigma ((^{v_{j}}r_{j})^{-\varepsilon_{j}})  \right).$$
From the above we see that the identity $Y$-sequence
$$\delta'=((^{u_{1}}r_{1})^{\varepsilon_{1}},...,(^{u_{n}}r_{n})^{\varepsilon_{n}},(^{v_{n+1}}r_{n+1})^{\varepsilon_{n+1}},...,(^{v_{m}}r_{m})^{\varepsilon_{m}})$$
has the primary identity property and then $\delta'$ is Peiffer equivalent to the empty sequence. Theorem \ref{wdom} implies that $d'=\mu \sigma (\delta') \in \hat{\mathfrak{U}}$ and so $d=d'\cdot \iota(\hat{u}) \in \hat{\mathfrak{U}}$. But $\bar{\chi}(d)=0$ with $d \in \hat{\mathfrak{U}}$ is only possible when $d=0$, so $K=\{0\}$ and then $\tilde{\Pi} \cong \hat{\mathfrak{U}}$ which in turn proves that the module of identities $\pi$ for $\mathcal{P}$ is zero as $\pi \cong \tilde{\Pi}/\hat{\mathfrak{U}}$, hence we have the asphericity of $\mathcal{P}$.

The converse follows from proposition \ref{free}.
\end{proof}

\section{A characterization for the asphericity of subpresentations} \label{cha}

Let $\mathcal{P}=(\mathbf{x},\mathbf{r})$ be an aspherical group presentation and $\mathcal{P}_{1}=(\mathbf{x},\mathbf{r}_{1})$ a subpresentation of the first where $\mathbf{r}_{1}=\mathbf{r}\setminus \{r_{0}\}$ and $r_{0}\in \mathbf{r}$ is a fixed relation. We denote by $\Upsilon_{1}$, $\mathfrak{U}_{1}$ monoids associated with $\mathcal{P}_{1}$ and by $\mathcal{G}(\Upsilon_{1})$ and $\hat{\mathfrak{U}}_{1}$ their respective groups and let $\tilde{\theta}_{1}$ be the morphism of the crossed module $\mathcal{G}(\Upsilon_{1})$ whose kernel is denoted by $\tilde{\Pi}_{1}$. Also we consider $\hat{\mathfrak{A}}_{1}$ the subgroup of $\hat{\mathfrak{U}}$ generated by all $\mu\sigma(bb^{-1})$ where $b \in Y_{1} \cup Y_{1}^{-1}$. Finally note that the monomorphism $f: \Upsilon_{1} \rightarrow \Upsilon$ induced by the map $\sigma_{1}(a) \rightarrow \sigma(a)$ induces a homomorphism $\hat{f}: \mathcal{G}(\Upsilon_{1} )\rightarrow \mathcal{G}(\Upsilon)$. These data fit into a commutative diagram as depicted below.
\begin{equation*}
\xymatrix{ \mathcal{G}(\Upsilon_{1}) \ar[rr]^{\hat{f}} \ar[rd]_{\tilde{\theta}_{1}} &&
\mathcal{G}(\Upsilon) \ar[ld]^-{\tilde{\theta}} \\ & F}
\end{equation*}

An immediate corollary of proposition \ref{free} is the following.
\begin{corollary} \label{free1}
If $\mathcal{P}$ is aspherical, then the subgroup $\hat{\mathfrak{A}}_{1}$ is a free abelian with basis equipotent to the set $G \times \mathbf{r}_{1}$. 
\end{corollary}
\begin{proof}
The restriction of $\Omega$ of proposition \ref{free} on the subgroup $\mathcal{N}_{1}(\mathcal{P})$ of $\mathcal{N}(\mathcal{P})$ generated by all the elements of the form $u^{\alpha}\cdot r^{\beta}$ where $r \in \mathbf{r}_{1}$ is an isomorphism onto $\hat{\mathfrak{A}}_{1}$.
\end{proof}

Further we note that the normal closure $N_{0}$ of $r_{0}$ in $FG(\mathbf{x})$ acts on the left of $\hat{\mathfrak{U}}_{1}$ in the obvious way therefore we have the displacement subgroup $[\hat{\mathfrak{U}}_{1},N_{0}]$ of $\hat{\mathfrak{U}}_{1}$ (see \cite{NAT}) which is a normal subgroup of $\tilde{\Pi}_{1}$.

\begin{proposition} \label{quasi}
If $\mathcal{P}$ is aspherical, then there is an epimorphism from $\tilde{\Pi}_{1}/ [\hat{\mathfrak{U}}_{1},N_{0}]$ to $\hat{\mathfrak{A}}_{1}$.
\end{proposition}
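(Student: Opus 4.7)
The plan is to exhibit the required epimorphism as the composition $\tilde{\Pi}_{1}\xrightarrow{\hat{f}}\hat{\mathfrak{U}}\xrightarrow{p}\hat{\mathfrak{A}}_{1}$ for a suitable retraction $p$, and then to verify that the $N_{0}$-action is trivialised on the target. First, for any $\tilde{d}\in\tilde{\Pi}_{1}$ the commutativity of the triangle preceding Lemma~\ref{free} yields $\tilde{\theta}(\hat{f}(\tilde{d}))=\tilde{\theta}_{1}(\tilde{d})=1$, so $\hat{f}(\tilde{\Pi}_{1})\subseteq\tilde{\Pi}$; by Theorem~\ref{wdom} applied to the aspherical $\mathcal{P}$ we have $\tilde{\Pi}=\hat{\mathfrak{U}}$, so $\hat{f}$ restricts to a homomorphism $\tilde{\Pi}_{1}\to\hat{\mathfrak{U}}$.

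Next I would construct $p$. Applying Lemma~\ref{free} first with $\mathbf{r}_{1}$ replaced by $\mathbf{r}$ identifies $\hat{\mathfrak{U}}$ with the free abelian group $\mathcal{N}(\mathcal{P})$ on $G\times\mathbf{r}$, while applying it as stated identifies $\hat{\mathfrak{A}}_{1}$ with the $\mathbb{Z}G$-submodule $\mathcal{N}_{1}(\mathcal{P})$ free abelian on $G\times\mathbf{r}_{1}$. Writing $\mathcal{N}(\mathcal{P})=\mathcal{N}_{1}(\mathcal{P})\oplus\mathcal{N}_{0}(\mathcal{P})$, where $\mathcal{N}_{0}(\mathcal{P})$ denotes the free abelian summand on $G\times\{r_{0}\}$, the canonical projection onto the first summand transports to a group homomorphism $p:\hat{\mathfrak{U}}\to\hat{\mathfrak{A}}_{1}$ which acts as the identity on each generator $\mu\sigma(aa^{-1})$ with $a\in Y_{1}\cup Y_{1}^{-1}$ and sends the remaining generators to $1$.

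Setting $\Phi:=p\circ\hat{f}|_{\tilde{\Pi}_{1}}$, for each $b\in Y_{1}\cup Y_{1}^{-1}$ the element $\mu_{1}\sigma_{1}(bb^{-1})$ lies in $\tilde{\Pi}_{1}$ since $\theta_{1}(bb^{-1})=1$, and $\Phi$ sends it to the generator $\mu\sigma(bb^{-1})$ of $\hat{\mathfrak{A}}_{1}$, giving surjectivity. To factor $\Phi$ through $\tilde{\Pi}_{1}/[\tilde{\Pi}_{1},N_{0}]$ I would check that $\hat{f}$ and $p$ are $FG(\mathbf{x})$-equivariant with respect to the natural actions; for $p$ this reduces to the fact that both $\mathcal{N}_{1}(\mathcal{P})$ and $\mathcal{N}_{0}(\mathcal{P})$ are $\mathbb{Z}G$-submodules, so the projection is $G$-linear. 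Since the $FG(\mathbf{x})$-action on $\hat{\mathfrak{A}}_{1}\cong\mathcal{N}_{1}(\mathcal{P})$ factors through $G$ and $N_{0}\subseteq\langle\langle\mathbf{r}\rangle\rangle$ maps to $1$ in $G$, every $n\in N_{0}$ acts trivially on $\hat{\mathfrak{A}}_{1}$. Hence $\Phi({}^{n}\tilde{d}\cdot\tilde{d}^{-1})={}^{n}\Phi(\tilde{d})\cdot\Phi(\tilde{d})^{-1}=1$ for all $n\in N_{0}$ and $\tilde{d}\in\tilde{\Pi}_{1}$, yielding the desired factorization through $\tilde{\Pi}_{1}/[\tilde{\Pi}_{1},N_{0}]$.

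The principal obstacle will be the well-definedness of $p$: the assignment on generators promotes to a genuine group homomorphism only through Lemma~\ref{free}, which leans crucially on asphericity of $\mathcal{P}$ to present $\hat{\mathfrak{U}}$ as free abelian with the described basis, permitting the clean splitting off of the $r_{0}$-summand. A secondary but routine check is that the $F$-action on $\hat{\mathfrak{U}}$ matches, under the isomorphism of Lemma~\ref{free}, the standard $\mathbb{Z}G$-module structure on $\mathcal{N}(\mathcal{P})$; this follows by tracing through the definitions of $\gamma$ and $\hat{g}$ in the proof of Lemma~\ref{free}.
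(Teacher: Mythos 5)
Your proof is correct, but it reaches the epimorphism by a genuinely different route than the paper. The paper shows directly that $\hat{f}(\tilde{\Pi}_{1})\subseteq\hat{\mathfrak{A}}_{1}$: it writes $\hat{f}(\tilde{d})\in\hat{\mathfrak{U}}$ as a product of a part supported on $Y_{1}\cup Y_{1}^{-1}$ and an $r_{0}$-part $\mu\sigma(C)\iota(\mu\sigma(D))$, applies the homomorphism $\hat{g}$ from the proof of Lemma~\ref{free} together with the freeness of $\mathcal{N}(\mathcal{P})$ to match the $r_{0}$-terms pairwise, and concludes $\mu\sigma(C)\iota(\mu\sigma(D))=1$; the epimorphism $\hat{\varphi}$ is then induced by $\hat{f}$ itself. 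You instead retain only the containment $\hat{f}(\tilde{\Pi}_{1})\subseteq\tilde{\Pi}=\hat{\mathfrak{U}}$ (via Theorem~\ref{wdom}) and discard the $r_{0}$-component by composing with the retraction $p$ coming from the splitting $\mathcal{N}(\mathcal{P})=\mathcal{N}_{1}(\mathcal{P})\oplus\mathcal{N}_{0}(\mathcal{P})$; this needs the unstated but true extension of Lemma~\ref{free} identifying all of $\hat{\mathfrak{U}}$ with $\mathcal{N}(\mathcal{P})$ (same proof, same reliance on asphericity), and you correctly flag that well-definedness of $p$ is exactly where this enters. Both arguments kill $[\tilde{\Pi}_{1},N_{0}]$ in the same way, namely through the triviality of the $N_{0}$-action on $\hat{\mathfrak{U}}$, so your equivariance check is sound (indeed $\hat{f}$ alone already annihilates the displacement subgroup). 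What your shortcut buys is avoiding the explicit cancellation computation with $\hat{g}$; what it gives up is that your map is a priori $p\circ\hat{f}$ rather than $\hat{f}$, so it does not by itself show that $\hat{f}$ carries $\tilde{\Pi}_{1}$ into $\hat{\mathfrak{A}}_{1}$ --- a fact the paper uses afterwards, since the theorem of Section~\ref{cha} and the subsequent corollary refer to the specific induced map $\hat{\varphi}$ and to $\hat{f}(\mu_{1}(d_{1}))\in\hat{\mathfrak{A}}_{1}$. For the proposition as literally stated, your argument is complete.
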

\begin{proof}
First we show that $\hat{f}$ maps $\tilde{\Pi}_{1}$ onto $\hat{\mathfrak{A}}_{1}$. Let $\tilde{d}=\mu_{1}\sigma_{1}(a_{1}\cdot \cdot \cdot a_{n}) \in \tilde{\Pi}_{1}$ where as before no $a_{i}$ is equal to any $\iota(\mu_{1}\sigma_{1}(^{u}{r})^{\varepsilon})$ and assume that
\begin{align*}
\hat{f}(\tilde{d})=&(\mu\sigma(b_{1}b_{1}^{-1})\cdot \cdot \cdot \mu\sigma(b_{s}b_{s}^{-1}))\cdot (\iota(\mu\sigma(b_{s+1}b_{s+1}^{-1}))\cdot \cdot \cdot \iota(\mu\sigma(b_{r}b_{r}^{-1})))\\
&(\mu\sigma(c_{1}c_{1}^{-1})\cdot \cdot \cdot \mu\sigma(c_{t}c_{t}^{-1}))\cdot (\iota(\mu\sigma(d_{1}d_{1}^{-1}))\cdot \cdot \cdot \iota(\mu\sigma(d_{k}d_{k}^{-1}))),
\end{align*} 
where the first half involves elements from $Y_{1}\cup Y_{1}^{-1}$ and the second one is 
$$\mu\sigma(C)\iota(\mu\sigma(D))$$ 
with
$$C=c_{1}c_{1}^{-1} \cdot \cdot \cdot c_{t}c_{t}^{-1} \text{ and } D=d_{1}d_{1}^{-1} \cdot \cdot \cdot d_{k}d_{k}^{-1},$$
where $C$ and $D$ involve only elements of the form $(^{u}{r_{0}})^{\varepsilon}$ with $\varepsilon=\pm1$. Recalling from above that in $\mathcal{G}(\Upsilon)$ we have 
\begin{align*}
\mu \sigma((a_{1} \cdot \cdot \cdot a_{n}) \cdot ((b_{s+1}b_{s+1}^{-1}) \cdot \cdot \cdot (b_{r}b_{r}^{-1})) \cdot ((d_{1}d_{1}^{-1}) \cdot \cdot \cdot (d_{k}d_{k}^{-1})))\\
= \mu \sigma(((b_{1}b_{1}^{-1}) \cdot \cdot \cdot (b_{s}b_{s}^{-1})) \cdot ((c_{1}c_{1}^{-1}) \cdot \cdot \cdot (c_{t}c_{t}^{-1}))),
\end{align*}
we can apply $\hat{g}$ defined in proposition \ref{free} on both sides and get
\begin{align*}
g\sigma((a_{1} \cdot \cdot \cdot a_{n}) \cdot ((b_{s+1}b_{s+1}^{-1}) \cdot \cdot \cdot (b_{r}b_{r}^{-1})) \cdot ((d_{1}d_{1}^{-1}) \cdot \cdot \cdot (d_{k}d_{k}^{-1})))\\
=g\sigma(((b_{1}b_{1}^{-1}) \cdot \cdot \cdot (b_{s}b_{s}^{-1})) \cdot ((c_{1}c_{1}^{-1}) \cdot \cdot \cdot (c_{t}c_{t}^{-1}))).
\end{align*}
If we now write each $c_{i}=(^{u_{i}}r_{0})^{\varepsilon_{i}}$ and each $d_{j}=(^{v_{j}}r_{0})^{\delta_{j}}$ where $\varepsilon_{i}$ and $\delta_{j}=\pm1$, while we write each $a_{\ell}=(^{w_{\ell}}r_{\ell})^{\gamma_{\ell}}$ and each $b_{p}=(^{\eta_{p}}\rho_{p})^{\epsilon_{p}}$ where all $r_{\ell}$ and $\rho_{p}$ belong to $\mathbf{r}_{1}$ and $\gamma_{\ell}, \epsilon_{p}=\pm 1$, then the definition of $g$ yields
\begin{align*}
(w_{1}^{\alpha}\cdot r_{1}^{\beta}+\cdot \cdot \cdot + w_{n}^{\alpha}\cdot r_{n}^{\beta})+(2\eta_{s+1}^{\alpha} \cdot \rho_{s+1}^{\beta} + \cdot \cdot \cdot +2\eta_{r}^{\alpha} \cdot \rho_{r}^{\beta})+(2v_{1}^{\alpha}+\cdot \cdot \cdot + 2v_{k}^{\alpha})\cdot r_{0}^{\beta}\\
=(2\eta_{1}^{\alpha} \cdot \rho_{1}^{\beta} + \cdot \cdot \cdot +2\eta_{s}^{\alpha} \cdot \rho_{s}^{\beta})+(2u_{1}^{\alpha}+\cdot \cdot \cdot + 2u_{t}^{\alpha})\cdot r_{0}^{\beta}
\end{align*}
The freeness of $\mathcal{N}(\mathcal{P})$ on the set of elements $r^{\beta}$ implies in particular that
\begin{equation*}
(2v_{1}^{\alpha}+\cdot \cdot \cdot + 2v_{k}^{\alpha})\cdot r_{0}^{\beta}=(2u_{1}^{\alpha}+\cdot \cdot \cdot + 2u_{t}^{\alpha})\cdot r_{0}^{\beta}
\end{equation*}
from which we see that $k=t$, and after a rearrangement of terms $u^{\alpha}_{i}=v^{\alpha}_{i}$ for $i=1,...,k$. The already known fact that in $\mathcal{G}(\Upsilon)$, $\mu\sigma(aa^{-1})=\mu\sigma(a^{-1}a)$ and the fact that if $u^{\alpha}=v^{\alpha}$, then for every $s \in \mathbf{r}$, $\mu\sigma((^{v}s)^{\delta}(^{v}s)^{-\delta})=\mu\sigma((^{u}s)^{\delta}(^{u}s)^{-\delta})$, imply easily that
\begin{equation*}
\mu\sigma((^{v}r_{0})^{\delta}(^{v}r_{0})^{-\delta})=\mu\sigma((^{u}r_{0})^{\varepsilon}(^{u}r_{0})^{-\varepsilon}).
\end{equation*}
If we apply the latter to pairs $(c_{i}, d_{i})$ for which $u^{\alpha}_{i}=v^{\alpha}_{i}$, we get that $\mu\sigma(C)\iota(\mu\sigma(D))=1$ which shows that $\hat{f}(\tilde{d}) \in \hat{\mathfrak{A}}_{1}$. 

Second, $\hat{f}$ induces $\hat{\varphi}: \tilde{\Pi}_{1}/ [\hat{\mathfrak{U}}_{1},N_{0}] \rightarrow \hat{\mathfrak{A}}_{1}$ because if $\iota(\hat{u}_{1}) ^{n_{0}}\hat{u}_{1}$ is any generator of $[\hat{\mathfrak{U}}_{1},N_{0}]$ and if $\hat{f}(\hat{u}_{1})=\hat{u}\iota(\hat{v})$ where $\hat{u}, \hat{v} \in \hat{\mathfrak{A}}_{1}$, then 
$$\hat{f}(\iota(\hat{u}_{1}) ^{n_{0}}\hat{u}_{1})=\hat{v}\iota(\hat{u}) \cdot ^{n_{0}}{\hat{u}} ^{n_{0}}{\iota(\hat{v})}=1,$$
since for every $\hat{u} \in \hat{\mathfrak{U}}$, $^{n_{0}}\hat{u}=\hat{u}$ in $\mathcal{G}(\Upsilon)$. 
\end{proof}

The following reduces the search for the asphericity of the subpresentation $\mathcal{P}_{1}$ in proving that the groups $\tilde{\Pi}_{1}/ [\hat{\mathfrak{U}}_{1},N_{0}]$ and $\hat{\mathfrak{A}}_{1}$ are isomorphic. 

\begin{theorem} \label{subp}
The subpresentation $\mathcal{P}_{1}=(\mathbf{x},\mathbf{r}_{1})$ is aspherical if and only if the groups $\tilde{\Pi}_{1}/ [\hat{\mathfrak{U}}_{1},N_{0}]$ and $\hat{\mathfrak{A}}_{1}$ are isomorphic under $\hat{\varphi}$. 
\end{theorem}
\begin{proof}
Under the assumption that $\mathcal{P}_{1}$ is aspherical we have to show that $\hat{\varphi}$ has an inverse. Indeed, since from corollary \ref{free1}, $\hat{\mathfrak{A}}_{1}$ is free abelian with basis the set of elements of the form $\mu \sigma (^{u}r (^{u}r)^{-1})$ we can define 
$$\hat{\varphi}': \hat{\mathfrak{A}}_{1} \rightarrow \tilde{\Pi}_{1}/ [\hat{\mathfrak{U}}_{1},N_{0}] \text{ by } \mu \sigma (^{u}r (^{u}r)^{-1}) \mapsto \mu_{1} \sigma_{1} (^{u}r (^{u}r)^{-1})[\hat{\mathfrak{U}}_{1},N_{0}],$$
which is well defined and a right inverse of $\hat{\varphi}$ since from the definition, $\hat{\varphi}\hat{\varphi}'=id_{\hat{\mathfrak{A}}_{1}}$. To prove that $\hat{\varphi}'$ is a left inverse of $\hat{\varphi}$ we recall from theorem \ref{wdom} that $\tilde{\Pi}_{1} = \hat{\mathfrak{U}}_{1}$ hence for every generator $\mu_{1}\sigma_{1}(^{u}r(^{r}r)^{-1}) \cdot [\hat{\mathfrak{U}}_{1},N_{0}]$ of $\tilde{\Pi}_{1}/ [\hat{\mathfrak{U}}_{1},N_{0}]$ we have, 
\begin{align*}
\hat{\varphi}'\hat{\varphi}(\mu_{1}\sigma_{1}(^{u}r(^{r}r)^{-1})\cdot [\hat{\mathfrak{U}}_{1},N_{0}])&=\hat{\varphi}'\hat{f}(\mu_{1}\sigma_{1}(^{u}r(^{r}r)^{-1}))\\
&=\hat{\varphi}'\mu \sigma(^{u}r(^{r}r)^{-1})\\
&=\mu_{1} \sigma_{1} (^{u}r (^{u}r)^{-1})[\hat{\mathfrak{U}}_{1},N_{0}],
\end{align*}
proving that $\hat{\varphi}'\hat{\varphi}=id_{\tilde{\Pi}_{1}/ [\hat{\mathfrak{U}}_{1},N_{0}]}$.

Conversely, let $d$ be any identity $Y_{1}$-sequence and $u,v \in \mathfrak{U}_{1}$ such that 
$$\hat{\varphi}(\mu_{1}(u) \iota \mu_{1}(v))\cdot [\hat{\mathfrak{U}}_{1},N_{0}])=\hat{\varphi}(\mu_{1}\sigma_{1}(d) \cdot  [\hat{\mathfrak{U}}_{1},N_{0}]).$$
Such elements exist because $\hat{f}(\mu_{1}\sigma_{1}(d)) \in \hat{\mathfrak{A}}_{1}.$ But $\hat{\varphi}$ is an isomorphism, hence there is $K \in [\hat{\mathfrak{U}}_{1},N_{0}]$ such that 
\begin{equation} \label{K}
\mu_{1}\sigma_{1}(d) = \mu_{1}(u) \iota \mu_{1}(v) \cdot K,
\end{equation}
where $K$ equals to a product of generators $^{n_{0,i}}\hat{u}_{i}\iota(\hat{u}_{i})$ since conjugating a generator by an element of $\tilde{\Pi}_{1}$ does not alter the generator. If we apply on both sides of (\ref{K}) the canonical map $\nu_{1}: \mathcal{G}(\Upsilon_{1}) \rightarrow H_{1}/P_{1}$ where $H_{1}/P_{1}$ is the usual free crossed module for $\mathcal{P}_{1}$, then we see that $dP_{1}$ is trivial, thereby proving the claim.
\end{proof}

\end{document}